 \keywords{ primes, prime gaps, Eratosthenes sieve}
\subjclass{11N05, 11A41, 11A07}
\newtheorem{theorem}{Theorem}[section]
\newtheorem{lemma}[theorem]{Lemma}
\newtheorem{corollary}[theorem]{Corollary}
\newdimen\epsfxsize
\newdimen\epsfysize
\newcommand {\gap}     {\makebox[0.075 in]{}}   
\newcommand {\biggap}     {\makebox[0.2 in]{}}   
\newcommand {\st}      {\gap : \gap}   
\newcommand{\lil}   {\scriptstyle }
\newcommand{\llil}   {\scriptscriptstyle }
\newcommand {\fto}     {\longrightarrow}
\newcommand {\set}[1]  {\left\{ {#1} \right\}}   
\newcommand {\pml}[1]  {{#1}^{\#}}
\newcommand{\pgap}   {{\mathcal G}}
\begin{document}

\title{Expected biases in the distribution of consecutive primes}

\date{28 Apr 2024}

\author{Fred B. Holt}
\address{fbholt62@gmail.com; https://www.primegaps.info}

\begin{abstract}
In 2016 Lemke Oliver and Soundararajan examined the gaps between the first hundred million primes and observed
biases in their distributions modulo $10$.  Given our work on the evolution of the populations of various gaps
across stages of Eratosthenes sieve, the observed biases are totally expected.

The biases observed by Lemke Oliver and Soundararajan are a wonderful example for contrasting the computational range with the asymptotic range 
for the populations of the gaps between primes. The observed biases 
are the combination of two phenomena: 
\begin{itemize}
\item[(a)] very small gaps, say $2 \le g \le 30$, get off to 
quick starts and over the first $100$ million primes larger gaps are too early in their evolution; and 
\item[(b)] the assignment of small gaps across the residue classes disadvantages some of those classes - until
enormous primes, far beyond the computational range.
\end{itemize}

For modulus $10$ and a few other bases, we aggregate the gaps by residue class and track the evolution of these
teams as Eratosthenes sieve continues.  The relative populations across these teams start with biases across
the residue classes.  These initial biases fade as the sieve continues.  
The OS enumeration strongly agrees with a uniform sampling at the corresponding stage of the sieve.  
The biases persist well beyond the computational range,
but they are transient.

\end{abstract}

\maketitle

\section{Setting}

In 2016 Lemke Oliver and Soundararajan \cite{OS, OSQ} studied the distribution of the last digits of consecutive primes, over the first $100$ million 
prime numbers.  They observed a bias in these distributions relative to the simplest expected values.

Our previous work \cite{FBHSFU, FBHPatterns} provides exact models for the populations and relative populations
of gaps $g$ across stages of Eratosthenes sieve.  These models show that the populations of various gaps evolve
very slowly, and this evolution accounts for and predicts the biases observed by Lemke Oliver and Soundararajan \cite{OS, OSQ}.
These biases will persist throughout the computational range, but they will eventually fade away.

\vskip 0.125in

\subsection{Evolution of populations of gaps among primes}
This section is a summary of results in \cite{FBHSFU, FBHPatterns}.  At each stage of Eratosthenes sieve, there is a cycle of gaps $\pgap(\pml{p})$
of length $\phi(\pml{p})$ (number of gaps in the cycle) and span $\pml{p}$ (sum of the gaps in the cycle).  
If we take initial conditions from the cycle of gaps $\pgap(\pml{p_0})$, then we can derive exact models for the
populations $n_{g,j}(\pml{p})$ of driving terms of length $j$ and span $g$ in the cycle $\pgap(\pml{p})$ for all primes $p \ge p_0$
and all gaps $g \le 2p_1$.  For $j=1$, $n_{g,1}(\pml{p})$ denotes the population of the gap $g$ itself in the cycle $\pgap(\pml{p})$.

The populations $n_{g,1}(\pml{p})$ are all superexponential, dominated by the factor $\prod_{1}^{k}(p_i - 2)$.  To facilitate 
comparisons among the gaps, we derive the relative population models $w_{g,j}(\pml{p})$.
$$ w_{g,j}(\pml{p_k}) \; = \; n_{g,j}(\pml{p_k}) / \prod_{p=3}^{p_k} (p - 2) $$
At each stage of the sieve the relative population $w_{g,1}(\pml{p_k})$ represents the superexponential population $n_{g,1}(\pml{p_k})$
as a coefficient on $\prod_{3}^{p_k}(p - 2)$.   These models are derived in \cite{FBHSFU, FBHPatterns}.

\begin{figure}[hbt]
\centering
\includegraphics[width=5in]{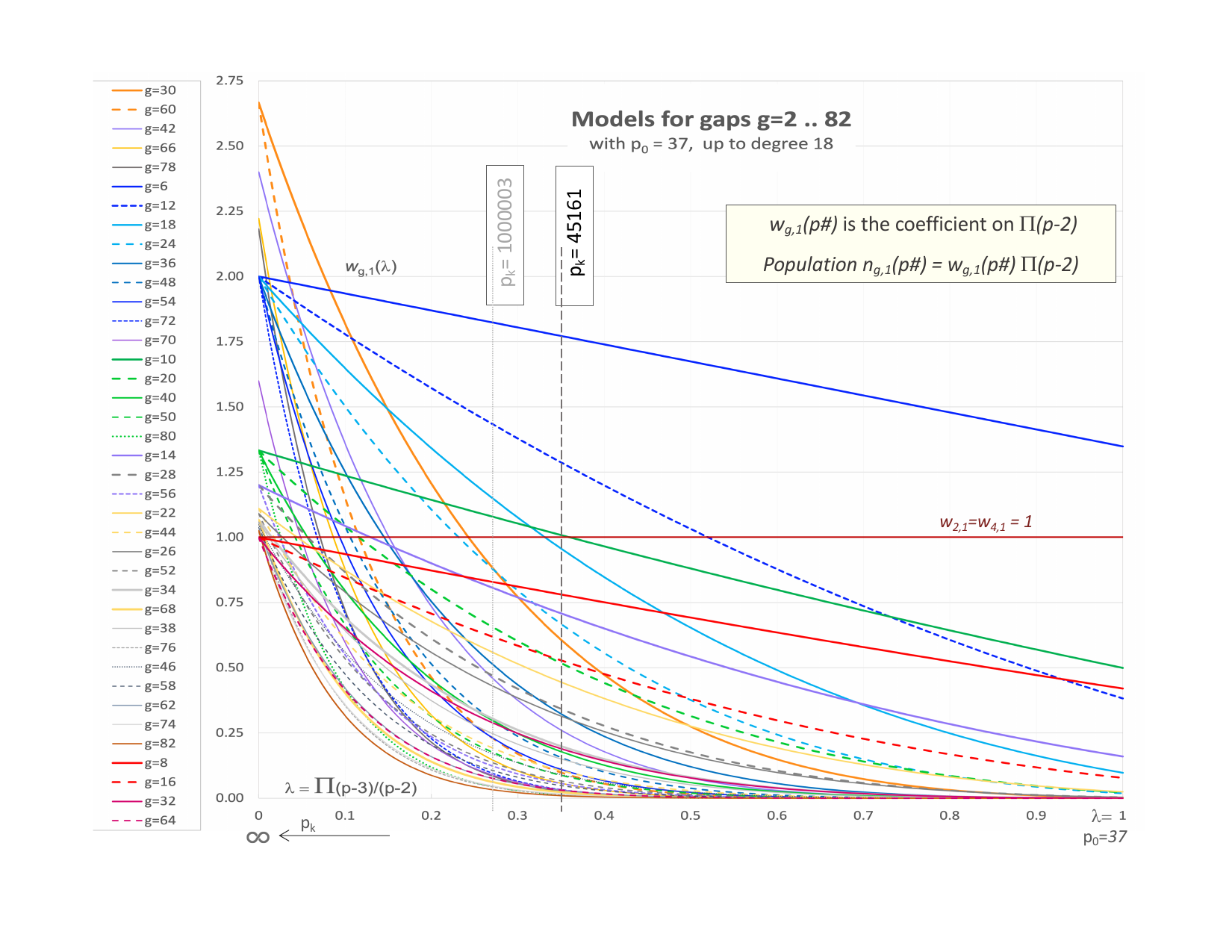}
\caption{\label{AllGapsFig} Shown here are the relative population models $w_{g,1}(\pml{p_k})$ for $p_0=37$ and all gaps $g \le 82$.
The graphs start at the right, where $p=37$ and the system parameter $\lambda=1$.  As the sieve continues, $\lambda \fto 0$
as $p \fto \infty$, and we follow the graphs to the left.  As the sieve proceeds through enormous primes,
the graphs converge toward their asymptotic values at $\lambda=0$.  
Counts for the first $10^8$ primes is fixed in the sieve by
$p_k = 45161$, which occurs around $\lambda=0.348$, and counts up to $x=10^{12}$ are fixed by $p_k=1000003$, where
$\lambda = 0.270$.}
\end{figure}

The relative populations of the gaps $g=2$ and $g=4$ are identically $1$.
$$ w_{2,1}(\pml{p_k}) \; = \; w_{4,1}(\pml{p_k}) \; = \; 1 \biggap \forall p_k > p_0.$$

We use some cycle of gaps $\pgap(\pml{p_0})$ to enumerate the initial populations of the gaps $g \le 2p_1$.  This bound $2 p_1$ encourages us to use
as large a $p_0$ as we can manage.  In our work we use $p_0=37$, so we can derive the exact population models for all gaps $g \le 82$.
New approaches to the enumerations \cite{Brown} of constellations within these large cycles may enable us to increase this starting point to 
$p_0 =41$ or $p_0 = 43$.

The models for the relative populations $w_{g,1}(\pml{p_k})$ for gaps $g \le 82$ are displayed in Figure~\ref{AllGapsFig}.
These models start at the righthand side, with $p_0=37$ and the system parameter $\lambda=1$.  As the sieve continues and $p_k \fto \infty$,
the system parameter 
$$\lambda = \prod_{p_1}^{p_k} \frac{p-3}{p-2} \biggap \fto \biggap 0.$$
The convergence to the asymptotic values $w_{g,1}(\infty)$ is very slow.  We can use Merten's Third Theorem to estimate correspondences between
large values of $p_k$ and small values of $\lambda$.  To reduce $\lambda$ in half, say from $\lambda=c$ to $\lambda=\frac{c}{2}$, we need to 
square the value of the corresponding prime, from $p_k=q$ to $p_k \approx q^2$.

In $\pgap(\pml{37})$, as seen on the righthand side of Figure~\ref{AllGapsFig}, the relative populations of the gaps are ordered primarily by size.
The gap $g=6$, represented by the solid blue line, has already jumped out to be the most populous gap in $\pgap(\pml{p_k})$, and it will continue to 
be the most populous gap until $g=30$ passes it up, when $\lambda \approx 0.083$, which corresponds to $p_k \approx 3.386 E19$.

For any gap $g$, its asymptotic value $w_{g,1}(\infty)$ is determined solely by the prime factors of $g$.  Let $Q$ be the set of odd prime factors
of $g$, then
\begin{equation}
 w_{g,1}(\infty) \; = \; \prod_{q \in Q} \frac{q-1}{q-2}. \label{EqWInf}
 \end{equation}
 These asymptotic values for the relative populations are consistent with the probabilistic predictions of Hardy \& Littlewood \cite{HL, BrentSmall}.
 
On the lefthand side of Figure~\ref{AllGapsFig} we see the gaps sort out into families that share the same sets of odd prime factors $Q$.  
We have color-coded the curves by families sharing the same odd prime factors.  
A closeup for small $\lambda$ and large primes $p_k$ is shown in Figure~\ref{AllGapsZoomFig}.

Treating the cycles of gaps $\pgap(\pml{p})$ as a discrete dynamic system \cite{FBHSFU, FBHPatterns}, we show that the relative population
$w_{g,1}(\pml{p_k})$ of a gap $g$ is given by
\begin{eqnarray*}
w_{g,1} (\pml{p_k}) & = & \prod_{q \in Q} \frac{q-1}{q-2} \; - \; l_2 \cdot {\prod_{p_1}^{p_k} \frac{p-3}{p-2}} \; + \; l_3 \cdot  {\prod_{p_1}^{p_k} \frac{p-4}{p-2}}  \; - \cdots \\
 & & \makebox[1.75 in]{} \cdots + (-1)^{J-1}  l_J \cdot { \prod_{p_1}^{p_k} \frac{p-J-1}{p-2} }\\
 & \approx & \prod_{q \in Q} \frac{q-1}{q-2} \; - \; l_2 \cdot \lambda \; + \; l_3 \cdot \lambda^2 \; - \cdots + \; (-1)^{J-1}  l_J \cdot \lambda^{J-1} 
\end{eqnarray*}
in which $J$ is the length of the longest admissible driving term for $g$, and $Q$ is the set of odd prime factors of $g$.

These models are useful in the following ways.  
For every gap $g$ we can calculate its asymptotic population $w_{g,1}(\infty)$, as given by
Equation~\ref{EqWInf}.
To determine the other coefficients $l_j$, we need the initial conditions (the populations of $g$ and all of its driving terms) for $g$ in some
cycle $\pgap(\pml{p_0})$ for which $g \le 2p_1$.  For gaps within this bound, we can determine the complete models for their relative populations, and
we can use these models to show us the evolution, as in Figure~\ref{AllGapsFig}, far beyond what we could ever enumerate directly.

To put the power of these models in perspective, the cycle $\pgap(\pml{61})$ with $1.54E22$ gaps lies at the horizon of current computing capacity.
The cycle $\pgap(\pml{199})$ has more gaps than there are atoms in the observable universe.  Yet we will work below with results from
$\pgap(\pml{45161})$ and even longer cycles.

\vskip 0.125in

The important observation here is that the relative populations of the various gaps evolve as the sieve continues, and this evolution is
incredibly slow.  When we look at the last digits of consecutive primes, we are assigning the gaps to be on teams by residue class.
Each team's relative population for some $\lambda$ is the sum of the relative populations of all the team's members.

\subsection{Distribution of the last digits for consecutive primes, over the first $100$ million primes.}
Lemke Oliver and Soundararajan \cite{OS} tabulated the counts of consecutive last digits for the first one hundred million primes, and they observed that
the counts were biased away from the distribution that we would expect from the asymptotic values.  
Table~\ref{BiasTable} shows their counts modulo $10$, their relative populations $W_r$ compared to team $r=2$, and the "expected" or asymptotic
values.

\begin{table}[ht]
\begin{center}
\begin{tabular}{c|cr|rcc} \hline
$r \bmod 10$ & $(a,b)$ & $\pi(x_0; 10, (a,b))$ & \multicolumn{1}{c}{$\sigma_r$} & $W_r = \sigma_r /\sigma_2$ & $W_r(\infty)$ \\ \hline
   & $\scriptstyle (1,3)$ & $\scriptstyle 7,429,438$ & & & \\
$2$ & $\scriptstyle (7,9)$ & $\scriptstyle 7,431,870$ & $22,852,739$ & $1$ & $1$ \\
   &  $\scriptstyle (9,1)$ & $\scriptstyle 7,991,431$ & & & \\ \hline
   & $\scriptstyle (3,7)$ & $\scriptstyle 7,043,695$ & & & \\
$4$ & $\scriptstyle (7,1)$ & $\scriptstyle 6,373,981$ & $19,790,617$ & $0.8660$ & $1$ \\
   &  $\scriptstyle (9,3)$ & $\scriptstyle 6,372,941$ & & & \\ \hline
   & $\scriptstyle (1,7)$ & $\scriptstyle 7,504,612$ & & & \\
$6$ & $\scriptstyle (3,9)$ & $\scriptstyle 7,502,896$ & $21,762,703$ & $0.9523$ & $1$ \\
   &  $\scriptstyle (7,3)$ & $\scriptstyle 6,755,195$ & & & \\ \hline
   & $\scriptstyle (1,9)$ & $\scriptstyle 5,442,345$ & & & \\
$8$ & $\scriptstyle (3,1)$ & $\scriptstyle 6,010,982$ & $17,466,066$ & $0.7643$ & $1$ \\
   &  $\scriptstyle (9,7)$ & $\scriptstyle 6,012,739$ & & & \\ \hline
   & $\scriptstyle (1,1)$ & $\scriptstyle 4,623,042$ & & & \\
$0$ & $\scriptstyle (3,3)$ & $\scriptstyle 4,442,562$ & $18,127,875$ & $0.7932$ & $4 / 3$ \\
   & $\scriptstyle (7,7)$ & $\scriptstyle 4,439,355$ & & & \\
   &  $\scriptstyle (9,9)$ & $\scriptstyle 4,622,916$ & & & \\ \hline
\end{tabular}
\caption{\label{BiasTable} Lemke Oliver and Soundararajan's enumeration of the distributions
of last digits of consecutive primes for the first $10^8$ primes.  They observe a bias $W_r$ compared to
the values we might expect from the asymptotic values $W_r(\infty)$.}
\end{center}
\end{table}

The notation $\pi(x_0; B, (a,b))$ stands for the number of pairs of consecutive primes $p_k < p_{k+1}$ such that $p_{k+1} < x_0$, and in base $B$
the last digit of $p_k$ is $a$ and the last digit of $p_{k+1}$ is $b$.  We group the pairs $(a,b)$ by their residues ${r = (b-a)\bmod B}$.

The simplest expectation is that each ordered pair $(a,b)$ should occur equally often.  Since the class $r=0$ has four ordered pairs $(a,b)$
assigned to it and the other classes each have three, we would expect the class $r=0$ to occur $4/3$ times as often as any other class.
In Table~\ref{BiasTable} we see that over the first $100$ million primes the relative occurrences $W_r$ are significantly different from
the expected values $W_r(\infty)$.

We will provide evidence that the asymptotic values $W_r(\infty)$ do indeed meet the simple expected values.  We will also show that the observed biases
are consistent with the relative populations of the gaps in Figure~\ref{AllGapsFig} at the point in the evolution covering the first $100$ million primes.

\section{Expected populations by residue class}
We assign each gap $g$ onto its respective team $r \bmod 10$.  We will see that some teams, notably $r=8$ and $r=0$, are disadvantaged
in the early stages of the evolution of gaps.  There simply are too few gaps on these teams up through the first $100$ million odd primes.
These biases will fade away, but this will occur well beyond the computational range.

\subsection{Interval of survival $\Delta H(p)$.}
The cycle of gaps $\pgap(\pml{p})$ has span $\pml{p}$, and we are led to consider where this cycle might best be reflected among the primes
themselves.  For the cycle $\pgap(\pml{p_k})$, all of the gaps up through $p_{k+1}^2$ will be confirmed as gaps among primes.  That is, 
$p_{k+1}^2$ is the smallest composite number remaining after Eratosthenes sieve has run through $p_k$.
On the other hand, all of the gaps up through $p_k^2$ were fixed by previous cycles.  So we define the {\em interval of survival} for the
cycle of gaps $\pgap(\pml{p_k})$ to be the interval
$$ \Delta H(p_k) \; = \; [p_k^2, \; p_{k+1}^2]. $$
To the extent that the gaps are distributed uniformly in $\pgap(\pml{p})$, we would expect the distribution of gaps in $\Delta H(p)$ to 
reflect this.

The $100$ millionth odd prime is ${x_0 = 2,038,074,751}$.  The prime ${p=45161}$ is the next prime larger than $\sqrt{x_0}$, and so the
counts of the first $100$ million gaps are covered by the intervals of survival up to $\Delta H(45161)$.  At $p_k=45161$ all of the gaps up through
$x_0$ have been confirmed as gaps among primes.  No composite numbers are left in the interval $[3,x_0]$ covered by the enumeration.

When we use $p_0=37$ to set the initial conditions for $w_{g,1}(\pml{p})$, 
the value $p_k=45161$ corresponds to $\lambda=0.3481$.  We have marked this section
across the relative population models in Figure~\ref{AllGapsFig} above.  Notice how few gaps have relative populations
$w_{g,1}(p_k)  > 0.25$ at this point and how far these are from their asymptotic populations at $\lambda=0$.

\subsection{Gaps of same residue $r \bmod 10$.}
To connect the cycles of gaps $\pgap(\pml{p})$ to the study of last digits of consecutive primes, we assign the gaps to teams by
their residues $r = g \bmod 10$.

For example, if consecutive primes are in the set $(a,b)=(3,1)$, then the gap between them must be $g=8$ or $g=18$ or $g=28$ or 
some other gap $g = 8 \mod 10$.  

The ordered pairs of last digits $(a,b)$ of consecutive primes
correspond to gaps between primes in the following way.
\begin{center}
\begin{tabular}{clcl}
$r \bmod 10$ & \multicolumn{1}{c}{$(a,b)$'s} & $\Leftrightarrow $ & \multicolumn{1}{c}{$g$'s} \\ \hline
$2$ & $\lil (1,3), \; (7,9), \; (9,1)$ & & $2, 12, 22, 32, 42, \ldots $ \\
$4$ & $\lil (3,7), \; (7,1), \; (9,3)$ & & $4, 14, 24, 34, 44, \ldots $ \\
$6$ & $\lil (1,7), \; (3,9), \; (7,3)$ & & $6, 16, 26, 36, 46, \ldots $ \\
$8$ & $\lil (1,9), \; (3,1), \; (9,7)$ & & $8, 18, 28, 38, 48, \ldots $ \\
$0$ & $\lil (1,1), \; (3,3), \; (7,7), \; (9,9)$ & & $10, 20, 30, 40, 50, \ldots $ 
\end{tabular}
\end{center}

\vskip 0.125in

We form a partial sum $\sigma_r$ for each residue class
modulo $10$, and then we take the ratios to $\sigma_2$, defining $W_r = \sigma_r / \sigma_2$.
$$ \sigma_r (x) \; = \; \sum_{b-a = r \bmod 10} \pi(x; 10, (a,b)) \; = \; \sum_{g = r \bmod 10} n_{g,1}(x),$$
and
$$ W_r(x) \; = \; \frac{\sigma_r(x)}{\sigma_2(x)} \; = \; {\sum_{g=r \bmod 10} w_{g,1}(x)}\; / {\sum_{g=2 \bmod 10} w_{g,1}(x)}.$$

For the partial sums, we limit the sums by the range of gaps, ${2 \le g \le g_{\max}}$,
and by the cycle of gaps $\pgap(\pml{p})$ over which we perform the comparative enumeration.
$$\sigma_r(g_{\max},\pml{p}) = \sum_{\substack{g \equiv r \bmod 10; \\ g \le g_{\max}}} w_{g,1}(\pml{p}) $$
Consistent with our normalized populations above, we then form the ratios
$$ W_r(g_{\max},\pml{p}) \; = \; \frac{\sigma_r(g_{\max},\pml{p})}{\sigma_2(g_{\max}, \pml{p})}. $$

We are interested in the limits $W_r(\infty, \infty)$ if they exist, and in the evolution of these
ratios, especially around $p_k = 45161$. 

\vskip 0.125in

This is a wonderful example for contrasting the computable range with the asymptotic range 
for the relative populations of gaps in $\pgap(\pml{p})$. 
The biases observed by Lemke Oliver and Soundararajan
are the combination of two phenomena: 
\begin{itemize}
\item[(a)] very small gaps, say $2 \le g \le 30$, get off to very
quick starts and for $\lambda \approx 0.348$ larger gaps are too early in their evolution, and 
\item[(b)] the assignment of small gaps across the residue classes disadvantages some of those classes - until
$\lambda$ gets very small.
\end{itemize}

\subsection{a.  A few small gaps get quick starts.}
The righthand side of Figure~\ref{AllGapsFig} illustrates this phenomenon of small gaps getting
rapid headstarts in $w_{g,1}(\pml{p})$. 

We have $w_{2,1}(\pml{p}) = w_{4,1}(\pml{p}) = 1$ for all $p \ge 3$, and this level of
$w_{g,1}=1$ is one yardstick for comparing the relative populations of other gaps.

When $p=37$, which is $\lambda=1$ in Figure~\ref{AllGapsFig}, we see that the gap $g=6$ is the most populous
gap in $\pgap(\pml{p})$ and already has a relative population of $w_{6,1}(\pml{37})=1.35$.
The only other gap cresting $w_{g,1}=0.5$ is $g=10$.

Over the first $100$ million primes, with $\lambda =0.348$, we see that $w_{6,1}(\pml{45161})=1.77$ and
only three other gaps are starting to cross that threshold $w_{g,1}=1$, the gaps $g=12, \; 18,\; 10$.
Only $17$ of the $41$ gaps from $g=2,\ldots,82$ have values $w_{g,1}(\pml{45161}) > 0.25$, 
and all $17$ of these gaps satisfy $g \le 42$.

From Equation~\ref{EqWInf} we see that the minimum asymptotic value is $w_{2^k,1}(\infty)=1$.  Thirteen of the gaps $g \le 82$ have
asymptotic values $w_{g,1}(\infty) \ge 2$, but even at $\lambda=0.2$ only five of these thirteen
will have passed above the threshold $w_{g,1} > 1$, and this corresponds to $p \approx 127,292,773$,
whose interval of survival extends beyond $1.6E16$.

The gap $g=30$ finally passes $g=6$ as the most populous gap at $\lambda=0.083$ or primes
$p_k \approx 3.386E19$, with a relative population of $1.946$.  
The gaps $g=210$ and $g=2310$ will
become more populous in turn, but we cannot even get initial conditions for these gaps until cycles
well beyond our reach.  We can calculate the asymptotic values for all of the gaps but not the models 
for the evolution of their populations.

The point here is that through and well beyond the computational range, a few small gaps dominate the
distributions, and larger gaps do not get anywhere near the asymptotic values for their relative populations
until much larger primes.
By its nature, the recursion that produces the cycles of gaps $\pgap(\pml{p})$ introduces the
gaps $g$ into the cycles approximately in order of magnitude, with occasional localized exceptions.

\subsection{b. The assignment of small gaps across the teams disadvantages some teams.}
Let's look at the rosters of small gaps assigned to each team $r \bmod 10$.  
Refer to Figure~\ref{AllGapsFig} to check the values in the following.

The gaps $g=6$ and $g=12$ are the only two gaps contributing significantly more than $w_{g,1}=1$ at
$\lambda=0.348$, so the teams $W_6$ and $W_2$ have this advantage.

Then there's a cluster of gaps around $w_{g,1}=1$:  $g=2,4,10,18$.  Falling from $w_{g,1}=0.75$ down past 
$w_{g,1}=0.5$, we have in order the gaps ${g= 8,14, 24,30,16,20,22}$.  The distribution of these 
contributions across the residue classes anchors the early bias.  We include the gaps $g=34,40,48$ in this summary
so that each residue class is represented by its top four members at $\lambda=0.348$.

\begin{center}
\begin{tabular}{c|ccccccc|r|}
$\lil r \bmod 10$ & $\lil g=6$ & $\lil g=12$ & $\lil 10,2,4,18$ & $\lil 8,14,24,30$ & $\lil 16,20,22$ & $\lil 28,36,26,42$ & $\lil 34,40,48$ & $\lil \sum_r$ \\ \hline
$2$ &   & $\lil 1.29 $ & $\lil  1$ &  & $\lil 0.45$  & $\lil 0.27$ & & $\lil 3.01$ \\
$4$ &  &  & $\lil  1$ & $\lil 0.71+0.67$ &   & & $\lil 0.20$ & $\lil 2.58$  \\
$6$ & $\lil 1.77 $  &  &  &  & $\lil 0.53$  & $\lil  0.33+0.32$ & & $\lil 2.95$ \\
$8$ &   &  & $\lil 0.96 $ & $\lil 0.78$ &   & $\lil 0.35$ & $\lil 0.16$ & $\lil 2.25$ \\
$0$ &   &  & $\lil 1.01 $ & $\lil 0.61$  & $\lil 0.52$  & & $\lil 0.18$ & $\lil 2.32$  
\end{tabular}
\end{center}

\begin{figure}[htb]
\centering
\includegraphics[width=5in]{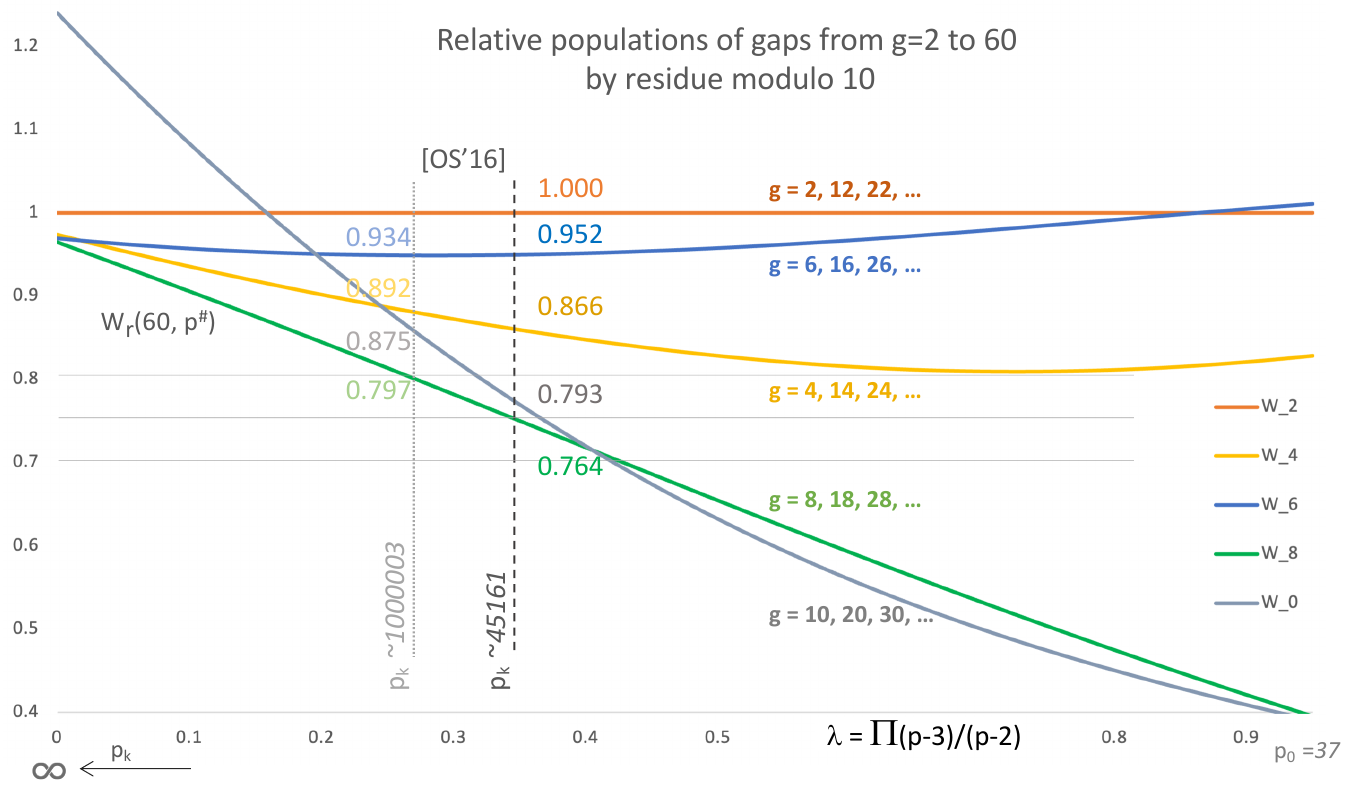}
\caption{\label{WsFig} The ratios $W_r(60,\pml{p})$ of the populations of gaps in each residue class modulo $10$,
normalized by the population of gaps $g = 2 \bmod 10$.  The initial conditions for gaps ${2 \le g \le 60}$ come from $\pgap(\pml{37})$.  
The dashed lines indicate where the calculations by Lemke Oliver and Soundararajan lie, for the first $100$ million primes and for $x_0=10^{12}$.}
\end{figure}

Figure~\ref{WsFig} shows the relative populations of the five residue classes, with gaps $g_{\max} \le 60$.
We mark the section for $p_k = 45161$ at $\lambda = 0.348$.  This covers the enumeration by Lemke Oliver and Soundararajan.
The biases they observed are starkly represented in these models.

We see that the teams $8 \mod 10$ and $0 \bmod 10$ are heavily disadvantaged, since their small gaps are relatively
scarce in $\pgap(\pml{p})$ through the computational range.

If we have models up to $g=82$ why do we stop at $g_{\max}=60$?  To keep the team rosters as fair as we can.
As we increase $g_{\max}$, we quickly see that it is only fair to increase it by $10$ at a time, so that 
the classes have equal numbers of member gaps.  
Gaps that are multiples of $6$ add a weight of at least $2$ to $W_r(\infty)$.  When we consider the multiples of $6$, these
are distributed across the five residue classes on a cycle that has a period of $30$ for $g_{\max}$.
If $g_{\max}$ is a multiple of $30$, each team has the same total number of gaps and same number of multiples of $6$.  
With models up to $g \le 82$, the best we can do is $g_{\max}=60$.  

If we could get initial conditions from $\pgap(\pml{43})$, we would determine the models for the relative populations for gaps
up to $g \le 94$ and we could raise $g_{\max}$ for complete models to $90$.
 
 \vskip 0.125in
 
In $W_r(g_{\max},\pml{p})$
the two parameters $g_{\max}$ and $p$ are not independent.
For any choice of $p$ there is a maximum gap in the cycle $\pgap(\pml{p})$, setting an upper
bound on an effective choice of $g_{\max}$.  Further, among the gaps that do occur in $\pgap(\pml{p})$
the populations of the larger gaps severely lag the populations of the smaller gaps.
So the slices of $W_r(g_{\max},\pml{p})$ in which we fix $p$ and let $g_{\max}$ increase are of interest
at most until $g_{\max}$ reaches the size of the maximum gap in $\pgap(\pml{p})$.

We also investigate the slices of $W_r(g_{\max}, \pml{p})$ in which we fix
$g_{\max}$ and let $p \fto \infty$.  When we fix $g_{\max}$ we have to remain aware that for 
very large values of $p$, we are ignoring the evolution of populations of larger gaps.
Figure~\ref{RhoFig} shows that over the computational range the gaps $g \le 82$ account for 
well over $80\%$ of the total gaps.

Table~\ref{TblWr} provides the asymptotic values $W_r(g_{\max},\infty)$ for a few $g_{\max}$ that are multiples
of primorials.  The table also includes examples for ${g_{\max}=80, 82}$ of the deviations introduced by intermediate
values.  These deviations in the $W_r$ will diminish as $g_{\max}$
gets larger.  

\begin{table}[h]
\begin{center}
\begin{tabular}{r|cllll}
\multicolumn{1}{c}{$g_{\max}$} &  \multicolumn{1}{c}{$\lil W_2(g_{\max} , \infty)$} &
  \multicolumn{1}{c}{$\lil W_4(g_{\max} , \infty)$} &  \multicolumn{1}{c}{$\lil W_6(g_{\max} , \infty)$} &
    \multicolumn{1}{c}{$\lil W_8(g_{\max} , \infty)$} &  \multicolumn{1}{c}{$\lil W_0(g_{\max} , \infty)$}  \\ \hline
$30$ & $1$ & $\lil 1.02162162$ & $\lil 0.99508600$ & $\lil 1.02162162$ & $\lil 1.29729730$ \\
$60$ & $1$ & $\lil 0.97393142$ & $\lil 0.96936858$ & $\lil 0.96440840$ & $\lil 1.24001879$ \\
$80$ & $1$ &  $\lil 0.89428493$ & $\lil 0.99854518$ & $\lil 0.99208031$ & $\lil 1.16873601$ \\
 $82$ & $1$ &  $\lil 0.82184733$ & $\lil 0.91766244$ &  $\lil 0.91172122$ & $\lil 1.07406771$ \\
$90$ & $1$ & $\lil 1.01138869$ & $\lil 0.99856424$ & $\lil 0.99947185$ & $\lil 1.28466922$ \\ \hline
$210$ & $1$ & $\lil 1.00041604$ & $\lil 1.00223358$ & $\lil 1.00193536$ & $\lil 1.30766905$ \\
$420$ & $1$ & $\lil 1.00072633$ & $\lil 1.00293216$ & $\lil 1.00258867$ & $\lil 1.31916484$ \\ 
$630$ & $1$ & $\lil 0.99896438$ & $\lil 0.99872360$ & $\lil 0.99867563$ & $\lil 1.31777582$ \\ \hline
$2310$ & $1$ & $\lil 0.99994434$ & $\lil 0.99977058$ & $\lil 0.99972744$ & $\lil 1.32869087$ \\
$30030$ & $1$ & $\lil 0.99996112$ & $\lil 0.99998697$ & $\lil 0.99998873$ & $\lil 1.33276811$ \\ 
$\infty$ & $1$ & $1$ & $1$ & $1$ & $1.33333$ \\ \hline
\end{tabular}
\end{center}
\caption{ \label{TblWr}This table shows the evolution of $W_r(g_{\max},\infty)$ over a modest range
of $g_{\max}$.  Gaps that are multiples of primorials, $g=k\cdot \pml{p}$, provide good markers
for the trends of $W_r$, because these gaps allow the distribution of prime factors to even out across 
the classes.  Intermediate values, represented here by the rows $g_{\max}=80$ and $82$ produce predictable deviations.  
The gap $g=82$ contributes $w_{82,1}(\infty)=1.026$ toward $W_2$.  This increases the denominator
for the other $W_r$ and lowers these values.  As $g_{\max}$ increases the asymptotic ratios $W_r(g_{\max}, \infty)$ 
approach the simple expected values.}
\end{table}

When we take $g_{\max}$ to be a multiple of the primorial $\pml{p_k}$, 
the multiples of primes up through $p_k$ are distributed
uniformly across the residue classes, except of course for the multiples of $p=5$, all of which fall in the
class $0 \bmod 10$.  For each even gap $g$ that is not a multiple of $5$, the gaps $g$, $2g$, $3g$, $4g$
occur in different residue classes ${r=2,4,6,8}$.
The gaps $g$, $2g$, and $4g$ all contribute the same weight $w_g(\infty)$ to their residue classes $W_r$.
The gap $3g$ has weight $2w_g(\infty)$ unless $3 | g$. 
And the gap $5g$ contributes $\frac{4}{3}w_g(\infty)$ to $W_0$.

If $g_{\max}$ is chosen between multiples of $\pml{p_k}$, then multiples of some primes,
including at least the multiples of $p_k$, 
will not have completed a cycle through the residue classes.  If a cycle is incomplete, the residue class $0 \bmod 10$ is always
disadvantaged since it is always the last class to receive a member in these cycles.

Consequently, in Figure~\ref{WsFig}, we use $g_{\max}=60$ even though we have
complete models for gaps up to $g=82$. 

The incomplete cycles for primes $q > p_k$ perturb the values $W_r$ away from their asymptotic values.
These perturbations decrease for larger $q$, since the prime $q$ contributes a factor $\frac{q-1}{q-2}$
to the terms corresponding to the gaps $g$ that are multiples of $q$.  As $q$ grows, this factor gets closer
to $1$.

\subsection{Computational horizon for primes.}
It is interesting to note here again how remote the asymptotic state is.
Based on these asymptotic distributions, we expect that the biases observed by 
Lemke Oliver and Soundararajan will disappear among large primes.  

For the cycles of gaps $\pgap(\pml{p})$, the evolution of the dynamic system
plays out on massive scales.  Compared to the scales on which the primes evolve, 
Lemke Oliver and Soundararajan's 
sample of the first $10^8$ primes is very early in the evolution of prime gaps.
Their sample is initially and amply covered by the cycle of gaps $\pgap(\pml{29})$,
and the sample falls within the $p_{k+1}^2$ horizon for survival for $\pgap(\pml{45161})$.
The dashed line in Figure~\ref{WsFig} marks the spot, around $\lambda \approx 0.348$,
where the ratios for the Lemke Oliver and
Soundararajan data occur for the partial sums $W_r(60,\pml{p})$.

\begin{figure}[htb]
\centering
\includegraphics[width=4.5in]{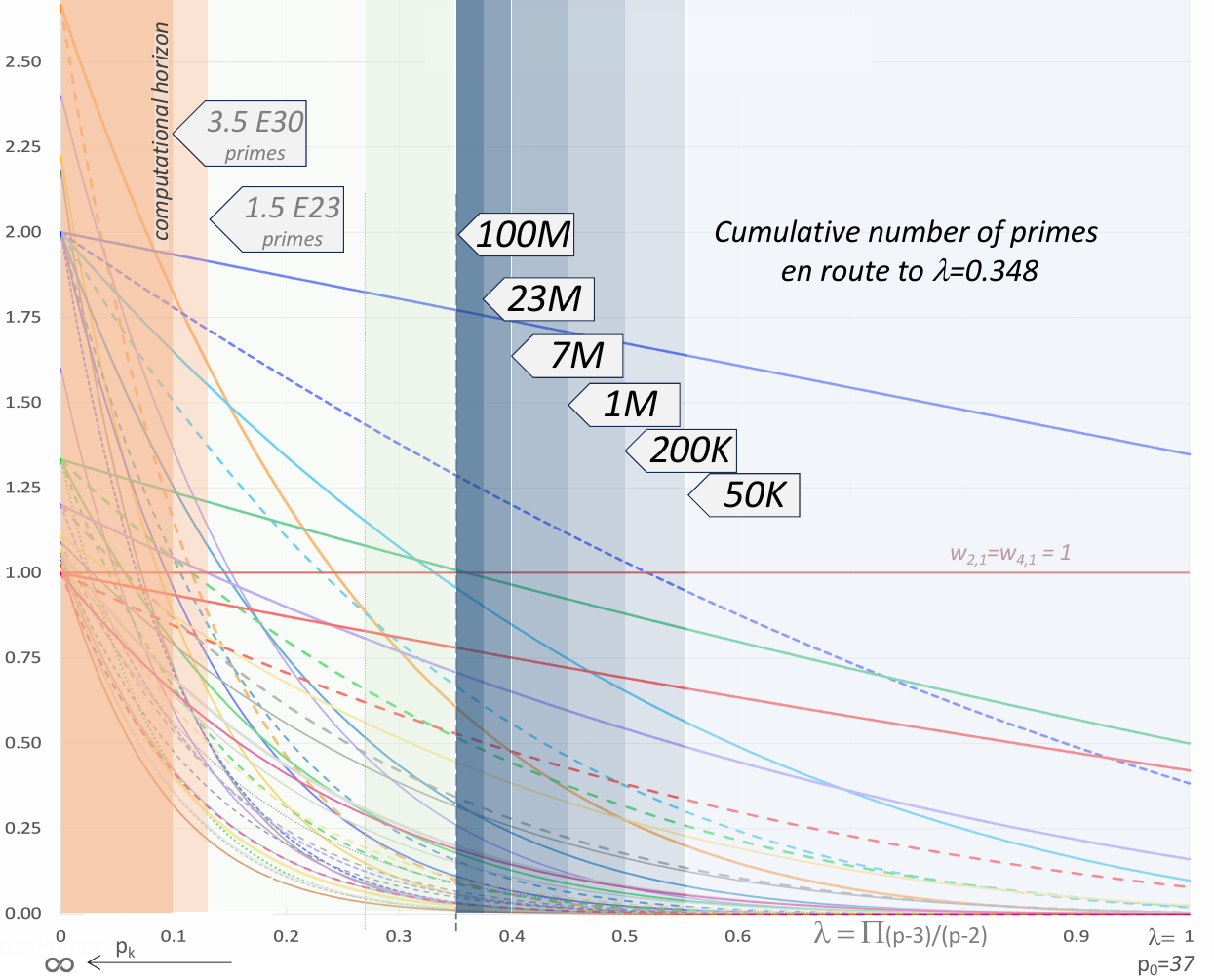}
\caption{\label{100MFig} Showing the accumulation of the first $1 E8$ primes from $\lambda=1$ to $0.348$.  We see that the relative
populations around $\lambda \approx 0.35$ to $0.4$ dominate this collection of gaps.  We also mark the horizon for the computational range.}
\end{figure}

Figure~\ref{100MFig} illustrates the distribution of the first $100$ million primes across the models of the relative populations of gaps $g \le 82$.
We see that $77$ million primes lie between $\lambda = 0.348$ and $0.375$,  and that $93$ million primes lie between $\lambda=0.348$
and $0.4$. This bolsters the expectation that the overall statistics for the first $100$ million primes would track closely with the relative 
populations $w_{g,1}$ over these intervals.

How far can we expect to conduct other large statistical samples like the Lemke Oliver \& Soundararajan data?
In Figure~\ref{100MFig} we also mark the computational horizon, represented here by the first $1.5 E23$ primes and the first $3.5 E30$ primes.
The first value indicates the current estimate for all data storage globally.  The second value is the number of primes accumulated
on the way down to $\lambda=0.1$.  

Any exhaustive statistical search, for example over a complete interval of survival $\Delta H(p)$, is likely to be in the range $\lambda > 0.1$.
The size of data and the computational time required forestall getting complete samples beyond this horizon. 
Note that this horizon occurs before the gap $g=30$ has become the most populous gap and long before gaps with even larger asymptotic relative 
populations will proliferate.  Computational samples of the primes do not reflect their asymptotic behaviors.

Another question about the completeness and accuracy of our analysis above is about the gaps that are not included, the gaps $g > 82$.
For a given value of $\lambda$, what proportion $\rho(\lambda)$ of all the gaps are represented by our models of the relative populations of the gaps
$g \le 82$?

\begin{lemma}
Let ${\mathcal S}$ be a subset of gaps and let $\rho_{\mathcal S}(\lambda)$ be the proportion of all gaps represented by the gaps
in ${\mathcal S}$ at the value $\lambda$.  Then for $p > 1500$
\begin{equation}
\rho_{\mathcal S}(\lambda) \approx c_0 \cdot \lambda \cdot  \sum_{g \in {\mathcal S}} w_{g,1}(\pml{p})
\end{equation}
with $c_0 = 0.6099 \cdot  \prod_5^{p_0} \frac{q-3}{q-2}$
\end{lemma}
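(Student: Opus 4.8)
The plan is to observe that $\rho_{\mathcal S}(\lambda)$ is really a statement about a single cycle. Fix the prime $p_k$ for which $\lambda=\prod_{p_1}^{p_k}\frac{p-3}{p-2}$, where $p_1$ denotes the prime immediately following $p_0$ (so this product is empty, $\lambda=1$, precisely when $p_k\le p_0$). The cycle $\pgap(\pml{p_k})$ contains $\phi(\pml{p_k})$ gaps in all, of which exactly $\sum_{g\in{\mathcal S}}n_{g,1}(\pml{p_k})$ have length lying in ${\mathcal S}$, so
\[
\rho_{\mathcal S}(\lambda)\;=\;\frac{1}{\phi(\pml{p_k})}\sum_{g\in{\mathcal S}}n_{g,1}(\pml{p_k}).
\]
First I would remove the two superexponential factors: substitute $n_{g,1}(\pml{p_k})=w_{g,1}(\pml{p_k})\prod_{p=3}^{p_k}(p-2)$ and $\phi(\pml{p_k})=\prod_{p=3}^{p_k}(p-1)$ (the $p=2$ factor of $\phi$ being $1$), which yields the \emph{exact} identity
\[
\rho_{\mathcal S}(\lambda)\;=\;\Bigl(\sum_{g\in{\mathcal S}}w_{g,1}(\pml{p_k})\Bigr)\prod_{p=3}^{p_k}\frac{p-2}{p-1}.
\]
Everything then comes down to the purely arithmetic claim $\prod_{p=3}^{p_k}\frac{p-2}{p-1}\approx c_0\,\lambda$.

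To see this I would divide out $\lambda$ directly. The primes from $3$ to $p_k$ are those from $3$ to $p_0$ together with those from $p_1$ to $p_k$, and $\bigl(\tfrac{p-2}{p-1}\bigr)\big/\bigl(\tfrac{p-3}{p-2}\bigr)=\frac{(p-2)^2}{(p-1)(p-3)}$, so
\[
\frac{1}{\lambda}\prod_{p=3}^{p_k}\frac{p-2}{p-1}\;=\;\Bigl(\prod_{p=3}^{p_0}\frac{p-2}{p-1}\Bigr)\Bigl(\prod_{p=p_1}^{p_k}\frac{(p-2)^2}{(p-1)(p-3)}\Bigr).
\]
Every factor of the last product is $1+\frac{1}{(p-1)(p-3)}>1$, and since $\sum_p\frac{1}{(p-1)(p-3)}$ converges, the product over all $p\ge p_1$ converges; I define $c_0$ to be the resulting limit of the ratio. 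Then I would rewrite $c_0$ in the form claimed: pulling the $p=3$ factor out gives $\prod_{p=3}^{p_0}\frac{p-2}{p-1}=\tfrac12\prod_{p=5}^{p_0}\frac{p-2}{p-1}$, and applying $\frac{p-2}{p-1}=\frac{p-3}{p-2}\cdot\frac{(p-2)^2}{(p-1)(p-3)}$ to each remaining factor turns this into $\tfrac12\bigl(\prod_{p=5}^{p_0}\frac{p-3}{p-2}\bigr)\bigl(\prod_{p=5}^{p_0}\frac{(p-2)^2}{(p-1)(p-3)}\bigr)$. Since there are no primes strictly between $p_0$ and $p_1$, the factor $\prod_{p=5}^{p_0}\frac{(p-2)^2}{(p-1)(p-3)}$ and the tail $\prod_{p\ge p_1}\frac{(p-2)^2}{(p-1)(p-3)}$ reassemble into a single convergent product over all $p\ge 5$, giving
\[
c_0\;=\;\Bigl(\tfrac12\prod_{p\ge 5}\frac{(p-2)^2}{(p-1)(p-3)}\Bigr)\prod_{p=5}^{p_0}\frac{p-3}{p-2}.
\]
A direct numerical evaluation of the convergent product — dominated by $\tfrac98$ at $p=5$ and $\tfrac{25}{24}$ at $p=7$, with a rapidly shrinking tail — gives $\tfrac12\prod_{p\ge5}\frac{(p-2)^2}{(p-1)(p-3)}=0.6099$ to four places, which is exactly the constant displayed in the statement.

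It remains to account for the hypothesis $p>1500$. The identity for $\rho_{\mathcal S}(\lambda)$ above is exact; the \emph{only} approximation is replacing the finite ratio at $p_k$ by its limit $c_0$, and the relative error equals the tail $\prod_{p>p_k}\frac{(p-2)^2}{(p-1)(p-3)}-1$. Bounding $\log$ of this tail by $\sum_{p>p_k}\frac{1}{(p-1)(p-3)}\ll\sum_{p>p_k}p^{-2}\ll\frac{1}{p_k\log p_k}$ (a Mertens-type estimate), one finds the error drops below $10^{-4}$ once $p_k\ge1500$, i.e. below the precision at which $c_0$ is quoted. The whole argument is thus short; the one place care is genuinely needed is this tail bound, and otherwise the main thing to get exactly right is the bookkeeping of which primes enter the two normalizing products — the role of $p=2$ in $\phi(\pml{p_k})$, of $p=3$ in the normalization of $w_{g,1}$, and of the prime $p_1$ just past $p_0$ that separates the initial conditions from the $\lambda$-product.
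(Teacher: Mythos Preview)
Your proof is correct and follows essentially the same route as the paper: both start from $\rho_{\mathcal S}=\bigl(\sum_{g\in\mathcal S}w_{g,1}\bigr)\prod_{q=3}^{p_k}\frac{q-2}{q-1}$, pull off the $q=3$ factor of $\tfrac12$, and factor each remaining term as $\frac{q-2}{q-1}=\frac{(q-2)^2}{(q-1)(q-3)}\cdot\frac{q-3}{q-2}$ so that $\lambda$ appears explicitly and the convergent product $\tfrac12\prod_{q\ge5}\bigl(1+\tfrac{1}{(q-1)(q-3)}\bigr)\approx 0.6099$ remains. Your write-up is in fact slightly more complete than the paper's, since you supply the tail estimate justifying the threshold $p>1500$, which the paper merely asserts.
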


\begin{proof}
The total number of gaps in a cycle $\pgap(\pml{p})$ is $\phi(\pml{p})$.  Let $p_0$ be the prime at which $\lambda =1$, and let $p_k$ be
a prime corresponding to the value $\lambda$.
We can estimate $\rho$  
\begin{eqnarray*}
 \rho_{\mathcal S}(\lambda) & = & \frac{\sum_{g \in {\mathcal S}} w_{g,1}(\lambda)}{\sum_{g} w_{g,1}(\lambda)} \; = \;
   \frac{\sum_{g \in {\mathcal S}} w_{g,1}(\lambda)}{\prod_{q=3}^{p_k} \frac{q-1}{q-2}} \\
     & = &  \frac{1}{2} \cdot \prod_{q=5}^{p_k} \frac{(q-2)(q-2)}{(q-1)(q-3)} \cdot \prod_{q=5}^{p_0}\frac{q-3}{q-2} \cdot \prod_{q=p_1}^{p_k} \frac{q-3}{q-2} 
     \cdot \sum_{g \in {\mathcal S}} w_{g,1}(\lambda)\\
     & = &  \frac{1}{2} \cdot \prod_{q=5}^{p_k} \left( 1 + \frac{1}{(q-1)(q-3)} \right) \cdot \prod_{q=5}^{p_0}\frac{q-3}{q-2} \cdot \lambda(p_k) 
     \cdot \sum_{g \in {\mathcal S}} w_{g,1}(\lambda)\\
     & \approx &  0.6099 \cdot \prod_{q=5}^{p_0}\frac{q-3}{q-2} \cdot \lambda(p_k) 
     \cdot \sum_{g \in {\mathcal S}} w_{g,1}(\lambda)
\end{eqnarray*}
The approximation for the first product holds for $p_k > 1500$.
\end{proof}

\begin{corollary}
When $p_0 = 37$ and ${\mathcal S} = \set{g \st g \le 82}$, 
$$\rho(\lambda) \approx 0.1975 \cdot \lambda \cdot \sum_{g \le 82} w_{g,1}(\lambda).$$
\end{corollary}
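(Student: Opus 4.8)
The plan is to read off the Corollary as a one-line specialization of the Lemma. The Lemma already gives, for any subset ${\mathcal S}$ of gaps and any $p > 1500$,
$$\rho_{\mathcal S}(\lambda) \;\approx\; c_0 \cdot \lambda \cdot \sum_{g \in {\mathcal S}} w_{g,1}(\pml{p}), \qquad c_0 \;=\; 0.6099 \cdot \prod_{q=5}^{p_0} \frac{q-3}{q-2},$$
so it remains only to instantiate ${\mathcal S} = \set{g \st g \le 82}$ and the starting prime $p_0 = 37$, and to evaluate the resulting constant $c_0$ numerically.

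First I would fix $p_0 = 37$, so that $\prod_{q=5}^{p_0}\frac{q-3}{q-2}$ is the finite product over the ten primes $q = 5, 7, 11, 13, 17, 19, 23, 29, 31, 37$; evaluating it term by term gives a value near $0.32$. Multiplying by the universal factor $0.6099$ supplied by the Lemma then produces $c_0 \approx 0.1975$. With ${\mathcal S} = \set{g \st g \le 82}$ the sum $\sum_{g \in {\mathcal S}} w_{g,1}(\pml{p})$ becomes $\sum_{g \le 82} w_{g,1}(\lambda)$, and the displayed estimate
$$\rho(\lambda) \;\approx\; 0.1975 \cdot \lambda \cdot \sum_{g \le 82} w_{g,1}(\lambda)$$
follows. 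The hypothesis $p > 1500$ carried over from the Lemma costs nothing here, since the regimes of interest --- in particular $p_k = 45161$ at $\lambda \approx 0.348$, and everything to its left in Figure~\ref{AllGapsFig} --- all have $p_k \gg 1500$.

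Because the argument is pure substitution, I do not expect any genuine obstacle. The only point that needs care is bookkeeping of precision: one must carry enough significant digits in the finite product $\prod_{q=5}^{37}\frac{q-3}{q-2}$ for its product with $0.6099$ to round to the quoted value, and one should note that $0.6099$ is itself approximate --- it is the value the Lemma's proof assigns to $\tfrac12 \prod_{q=5}^{p_k}\bigl(1 + \tfrac1{(q-1)(q-3)}\bigr)$ for all $p_k > 1500$ --- so the constant $0.1975$ inherits exactly that order of accuracy, nothing sharper.
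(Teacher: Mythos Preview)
Your proposal is correct and matches the paper's treatment: the paper states the Corollary immediately after the Lemma with no separate proof, evidently regarding it as the one-line specialization you describe. Your added remarks on the provenance and precision of the constants $0.6099$ and $0.1975$ are accurate and, if anything, more explicit than what the paper provides.
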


\begin{figure}[htb]
\centering
\includegraphics[width=4.75in]{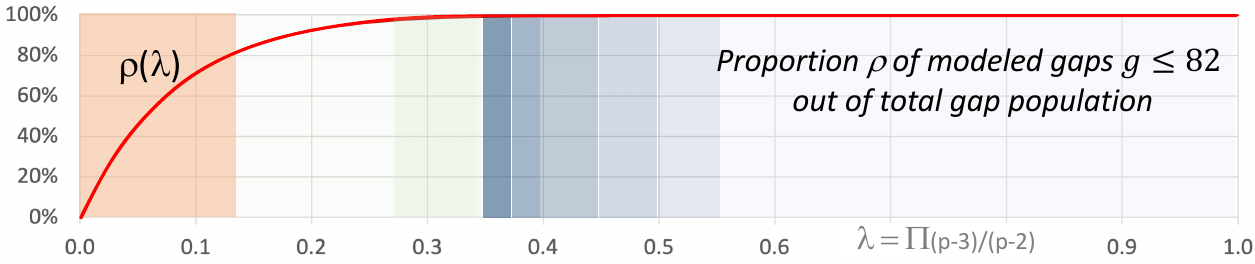}
\caption{\label{RhoFig} The graph of the proportion $\rho$ of modeled gaps $g \le 82$ to all gaps in $\pgap(\pml{p})$. 
Over the OS sample ${\rho > 99\%}$, and
throughout the computational range ${\rho > 80\%}$.}
\end{figure}

We graph $\rho(\lambda)$ in Figure~\ref{RhoFig}.  We see that larger gaps $g > 82$ do not impact our current results and account for less than
$20\%$ of total gaps through the computational range.

\begin{figure}[hbt]
\centering
\includegraphics[width=5in]{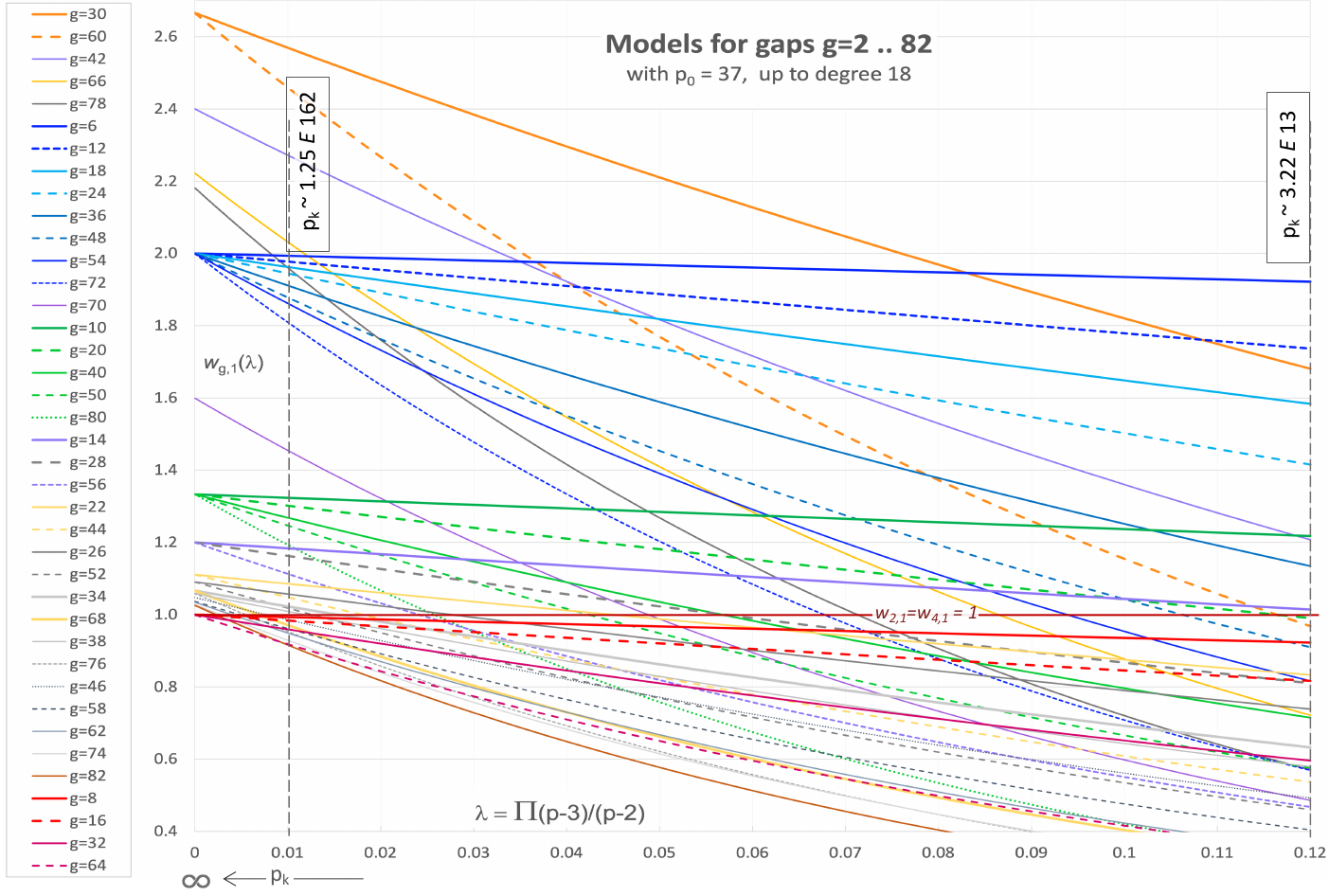}
\caption{\label{AllGapsZoomFig} Shown here is a closeup of the relative population models $w_{g,1}(\pml{p_k})$ for $p_0=37$ 
and all gaps $g \le 82$, as $\lambda$ gets small and $p_k$ gets large.  By $\lambda=0.01$ the populations of this set of gaps have almost
finished sorting into their asymptotic order.}
\end{figure}

From our observations above about the asymptotic ratios for small gaps, we believe that Lemke Oliver and
Soundararajan are observing transient phenomena.  Their sampled enumerations for $x_0=10^{12}$ are consistent
with $\lambda=0.27$ in Figure~\ref{WsFig}, and we see the ratios closing, especially for $W_0$.

These transient
biases will persist well beyond any computationally tractable primes.  Figure~\ref{AllGapsZoomFig} shows a closeup
of the relative population models $w_{g,1}(\lambda)$ for small $\lambda$ and large $p_k$, well beyond the computational range.

\section{Distributions in other bases}\label{SecBases}
The work above has addressed the residue classes of primes in base $10$.  For base $10$ 
the population models for gaps across stages of Eratosthenes sieve suggest
that the biases calculated by Lemke Oliver and 
Soundararajan are transient phenomena.  These biases will gradually fade for very large primes.  
How is this analysis affected by the choice of base?

Our work in base $10$ consisted of three components:  identifying the ordered pairs of last digits $(a,b)$ and gaps $g$
that correspond to each residue class; for a parameter $g_{\max}$,
comparing the asymptotic ratios $w_{g,1}(\infty)$ for the gaps $g \le g_{\max}$ within each 
residue class; and looking at the initial conditions and rates of convergence for the gaps within each residue class.
As we consider other bases, we look at the effect that a new base has on each of these components.

By setting a base, we set the assignment of gaps, especially the small gaps, to the respective residue classes.
These residue classes inherit the initial biases and rates of convergence associated with the assigned gaps.

We again emphasize that the initial populations are dominated by small gaps, especially the gaps $g \le 30$.  
Figure~\ref{AllGapsFig} illustrates the components of the resulting bias.  
These biases in the initial populations for small gaps will be inherited by the residue classes to which the small gaps belong.  

\vskip 0.125in

We illustrate this assignment of the initial bias by considering the small gaps $2 \le g \le 82$ 
under the bases $3$, $8$, and $30$.  

Since the gaps are all even, for any odd base $N$ there is a one-to-one map
from its residue classes into the base $2N$, that preserves the assignments of gaps to their 
residue classes.  For example, the results for bases $3$ and $6$ are equivalent, and the results for bases
$5$ and $10$ are equivalent.

The base $3$ acts much like the base $10$, in that
a prominent family of gaps is assigned to a single residue class.  For base $10$ all the
gaps $g=5n$ are assigned to $0 \bmod 10$, and for base $3$ all of the gaps $g=3n$ are
assigned to $0 \bmod 3$.  For the base $8$, as a multiple of $2$, the gaps with odd prime factors
spread out amongst the residue classes.

The base $30$ provides a broader distribution of the small gaps, including the multiples
 of $6$ and $10$.  The initial biases will strongly favor a small set of residue classes, but
 the asymptotics will eventually restore the balance.

\subsection{Distributions in base $3$.}  Lemke Oliver and Soundararajan calculated the distributions of the pairs of
last digits $(a,b)$ of primes modulo $3$ up through $10^{12}$, and they compare these favorably to a
conjectured model derived
from the Hardy and Littlewood's work on the $k$-tuple conjecture \cite{HL}.

In our approach through the cycles $\pgap(\pml{p})$, the ordered pairs $(a,b)$ of last digits correspond
to residue class ${b-a = r \bmod 3}$, and each gap $g$ is assigned to
its residue class $r \bmod 3$.

\begin{center}
\begin{tabular}{clclc}
$r \bmod 3$ &  \multicolumn{1}{c}{small $g$'s} & \multicolumn{1}{c}{$(a,b)$'s base $3$} & 
 $\Leftrightarrow $ &  \multicolumn{1}{c}{expected $W_r$} \\ \hline
$2$ & $2, 8, 14, 20, 26, \ldots $ & $(2,1)$ & & $1$ \\
$1$ &  $4, 10, 16, 22, 28,  \ldots $ & $(1,2)$ & & $1$ \\
$0$ &  $6, 12, 18, 24, 30, \ldots $ & $(1,1), \; (2,2)$ & & $2$ \\
\end{tabular}
\end{center}

Assuming that all of the ordered pairs eventually occur equally often, we set simple expected 
values for the $W_r$, setting $W_2=1$.  We list Lemke Oliver and Soundararajan's partial data for
$x_0=10^9$ and $x_0=10^{12}$,
and we compare this to $W_r(g_{\max},\pml{p})$ at $\lambda=0.360$ and $\lambda=0.270$ respectively. 
For $g_{\max}$
we have models up to $g=82$, but we use $g_{\max}=60$ to balance the contributions of the
gaps that are multiples of $3$ or $5$ across the residue classes.  For the limits $W_r(g_{\max},\infty)$
we don't require the models for the gaps, just their prime factors and associated weight
$w_{g,1}(\infty)$.  We tabulate $W_r(60,\infty)$ and $W_r(30030,\infty)$.

\begin{center}
\begin{tabular}{c|cc|cc|cc}
  & \multicolumn{6}{c}{$W_r(g_{\max},\pml{p})$ in base $3$} \\
$r \bmod 3$ & OS $10^9$ & $W_r (60,  )$ & OS $10^{12}$ & $W_r (60,  )$ & $W_r(60,\infty)$ &
 $W_r({\lil 30030},\infty)$  \\ 
  & & $\lil \lambda = 0.360$ & & $\lil \lambda = 0.270$ & & \\ \hline
$W_2$ & & $1.0000$ & & $1.0000$ & $1.00000$ & $1.00000$ \\
$W_1$ & $1.0000$ & $0.9991$ & $1.0000$ & $0.9981$ & $0.99051 $ & $1.00001$ \\
$W_0$ & $1.6045$ & $1.5864$ & $1.6863$ & $1.6641$ & $1.91863 $ & $1.99958$ \\
\end{tabular}
\end{center}

In base $3$ all multiples of $6$ will fall in the class $h = 0 \bmod 6$,
and thus all of the primorials $g=\pml{p}$ will fall within this class. 
In $\pgap(\pml{11})$ the gap $g=6$ is the most frequent gap, and it grows more quickly than other gaps for many more
stages of the sieve. 

For base $3$ the biases are somewhat muted through the first $100$ million primes, primarily affecting $W_0$.  As in base $10$
the biases have disappeared among the asymptotic values for $g_{\max}=30030$.

\subsection{Distributions in base $8$.} 
In base $8$ the small gaps are distributed more evenly across the residue classes.  We observe that 
the residue class $h = 0 \bmod 8$ starts slowly, and it lags the other classes for a long time.
Since the base is a power of $2$, every odd prime circulates through the residue classes.

\begin{center}
\begin{tabular}{clclc}
$r \bmod 8$ &  \multicolumn{1}{c}{small $g$'s} & \multicolumn{1}{c}{$(a,b)$'s base $8$} & 
 $\Leftrightarrow $ &  \multicolumn{1}{c}{expected $W_r$} \\ \hline
$2$ & $2, 10, 18, 26, \ldots $ & $\lil (1,3), \; (3,5), \; (5,7), \; (7,1)$ & & $1$ \\
$4$ & $4, 12, 20, 28,  \ldots $ & $\lil (1,5), \; (3,7), \; (5,1), \; (7,3)$ & &  $1$ \\
$6$ & $6, 14, 22, 30, \ldots $ & $\lil (1,7), \; (3,1), \; (5,3), \; (7,5)$ & & $1$ \\
$0$ & $8, 16, 24, 32, \ldots $ & $\lil (1,1), \; (3,3), \; (5,5), \; (7,7)$ & & $1$ \\
\end{tabular}
\end{center}

With base $8$, we use the models up to $g_{\max}=80$.  This distributes the multiples of $5$
evenly across the residue classes, although the class $W_6$ has one additional multiple of $3$, 
the gap $g=78$.

We compare Lemke Oliver and Sandararajan's data to $W_r(80, )$ for $\lambda=0.360$ and $0.270$, and
we list the limits $W_r(g_{\max},\infty)$ for $g_{\max} = 80$ and $30030$.

\begin{center}
\begin{tabular}{c|cc|cc|cc}
  & \multicolumn{6}{c}{$W_r(g_{\max},\pml{p})$ in base $8$} \\
$r \bmod 8$ & OS $10^9$ & $W_r (80,  )$ & OS $10^{12}$ & $W_r (80,  )$ & $W_r(80,\infty)$ &
 $W_r({\lil 30030},\infty)$  \\ 
  & & $\lil \lambda = 0.360$ & & $\lil \lambda = 0.270$ & & \\ \hline
$W_2$ & $1.0000$ & $1.0000$ & $1.0000$ & $1.0000$ & $1.00000$ & $1.00000$ \\
$W_4$ & $0.9585$ & $0.9552$ & $0.9708$ & $0.9671$ & $1.00106 $ & $0.99999$ \\
$W_6$ & $1.0034$ & $1.0056$ & $1.0059$ & $1.0147$ & $1.09568 $ & $1.00035$ \\
$W_0$ & $0.6739$ & $0.6599$ & $0.7395$ & $0.7258$ & $0.95553 $ & $0.99928$ \\
\end{tabular}
\end{center}

\subsection{Distributions in base $30$.}
The next primorial base is $30 = \pml{5}$.  This base is big enough that the small gaps are well separated, and the multiples
of $g=6$ and $g=10$ fall into a few distinct classes.  The early bias toward small gaps $g=2,4,6,10,12$ and 
even $g=14,18$ fall into separate residue classes.  Table~\ref{B30WTable} shows the early biases in $W_r(60,\pml{p})$ for base $30$, 
alongside the asymptotic values for $W_r(60,\infty)$ and $W_r(30030,\infty)$.

\begin{table}
\begin{center}
\begin{tabular}{clcc} 
$r \bmod {\bf 30}$ & $g$'s & $(a,b)$ & expected $W_r$ \\ \hline
$\lil W_2$ & $\lil 2, 32, \ldots$ & $\llil (29,1), (11,13), (17,19)$ &  $\lil 1$  \\
$\lil W_4$ & $\lil 4, 34, \ldots$ & $\llil (7,11), (13,17), (19,23)$ &  $\lil 1$ \\
$\lil W_6$ & $\lil 6, 36, \ldots$ & $\llil (1,7), (7,13), (13,19), (11,17), (17,23), (23,29)$ &  $\lil 2$ \\
$\lil W_8$ & $\lil 8, 38, \ldots$ & $\llil (11,19), (23,1), (29,7)$ &  $\lil 1$ \\
$\lil W_{10}$ & $\lil 10, 40, \ldots$ & $\llil (1,11), (7,17), (13,23), (19,29)$ &  $\lil 4/3$ \\
$\lil W_{12}$ & $\lil 12, 42, \ldots$ & $\llil (1,13), (7,19), (11,23), (17,29), (19,1), (29,11)$ &  $\lil 2$ \\
$\lil W_{14}$ & $\lil 14, 44, \ldots$ & $\llil (17,1), (23,7), (29,13)$ &  $\lil 1$ \\
$\lil W_{16}$ & $\lil 16, 46, \ldots$ & $\llil (1,17), (7,23), (13,29)$ &  $\lil 1$ \\
$\lil W_{18}$ & $\lil 18, 48, \ldots$ & $\llil (1,19), (11,29), (13,1), (19,7), (23,11), (29,17)$ &  $\lil 2$ \\
$\lil W_{20}$ & $\lil 20, 50, \ldots$ & $\llil (11,1), (17,7), (23,13), (29,19)$ &  $\lil 4/3$ \\
$\lil W_{22}$ & $\lil 22, 52, \ldots$ & $\llil (1,23), (7,29), (19,11)$ &  $\lil 1$ \\
$\lil W_{24}$ & $\lil 24, 54, \ldots$ & $\llil (7,1), (13,7), (19,13), (17,11), (23,17), (29,23)$ &  $\lil 2$ \\
$\lil W_{26}$ & $\lil 26, 56, \ldots$ & $\llil (11,7), (17,13), (23,19)$ &  $\lil 1$ \\
$\lil W_{28}$ & $\lil 28, 58, \ldots$ & $\llil (1,29), (13,11), (19,17)$ &  $\lil 1$ \\
$\lil W_0$ & $\lil 30, 60, \ldots$ & $\llil (1,1), (7,7), (11,11), (13,13),$ & $\lil 8/3$ \\ 
 & & $\llil (17,17), (19,19), (23,23), (29,29)$ & \\ 
\end{tabular}
\caption{\label{B30Table} For base $30$ 
we look at the last digits $(a,b)$ of 
consecutive primes, sort these into residue classes $r \bmod 30$, and 
identify the associated gaps for each class.  Assuming that each ordered pair
$(a,b)$ is equally likely to occur and setting $W_2=1$, we list the expected value
of $W_r$.}
\end{center}
\end{table}

\begin{table}
\begin{center}
\begin{tabular}{|c|cccc|} \hline
$\lil r \bmod {\bf 30}$ & $\lil W_r(60, {\lil \lambda=0.348})$ & $\lil W_r(60,\infty)$ & $\lil W_r(30030, \infty)$ & 
   $\lil {\rm expected} \; W_r$ \\ \hline
$\lil W_2$ & $\lil 1$ & $\lil 1$ &  $\lil 1$ &  $\lil 1$ \\
$\lil W_4$ & $\lil 1.00942$ & $\lil 1.03333$ & $\lil 1.00003$ & $\lil 1$ \\
$\lil W_6$ & $\lil 1.77813$ & $\lil 2$ & $\lil 1.99988$ & $\lil 2$ \\
$\lil W_8$ & $\lil 0.79999$ & $\lil 1.02941$ & $\lil 0.99998$ & $\lil 1$ \\
$\lil W_{10}$ & $\lil 1.02067$ & $\lil 1.33333$ & $\lil 1.33317$ & $\lil 4/3$ \\
$\lil W_{12}$ & $\lil 1.35454$ & $\lil 2.2$ & $\lil 1.99989$ & $\lil 2$ \\
$\lil W_{14}$ & $\lil 0.71359$ & $\lil 1.15556$ & $\lil 0.99991$ & $\lil 1$ \\
$\lil W_{16}$ & $\lil 0.54959$ & $\lil 1.02381$ & $\lil 1.00002$ & $\lil 1$ \\
$\lil W_{18}$ & $\lil 0.99517$ & $\lil 2$ & $\lil 1.99996$ & $\lil 2$ \\
$\lil W_{20}$ & $\lil 0.55173$ & $\lil 1.33333$ & $\lil 1.33329$ & $\lil 4/3$ \\
$\lil W_{22}$ & $\lil 0.46397$ & $\lil 1.10101$ & $\lil 1.00004$ & $\lil 1$ \\
$\lil W_{24}$ & $\lil 0.71723$ & $\lil 2$ & $\lil 1.99983$ & $\lil 2$ \\
$\lil W_{26}$ & $\lil 0.34735$ & $\lil 1.14545$ & $\lil 0.99997$ & $\lil 1$ \\
$\lil W_{28}$ & $\lil 0.36493$ & $\lil 1.11852$ & $\lil 0.99994$ & $\lil 1$ \\
$\lil W_0$ & $\lil 0.67385$ & $\lil 2.66667$ & $\lil 2.66451$ & $\lil 8/3$ \\ \hline
\end{tabular}
\caption{\label{B30WTable} The table for 
the distribution of $W_r(g_{\max},\pml{p})$ in base $30$.  In a base this large, biases across the
residue classes will persist until we give large gaps ample time to show up in the cycles $\pgap(\pml{p})$.}
\end{center}
\end{table}

\section{Conclusion}
For the first $10^8$ primes, Lemke Oliver and Soundararajan \cite{OS,OSQ} calculated how often the possible pairs
$(a,b)$ of last digits of consecutive primes occurred, and they observed biases compared to simple expected values.  
Regarding their calculations
they raised two questions: Does the observed bias persist?  Is the observed bias dependent upon the base? 
We have addressed  both of these questions by using the dynamic system that exactly models the populations
of gaps across stages of Eratosthenes sieve. 

The study of last digits of consecutive primes provides an excellent example with which to
contrast the primes we can observe versus the primes in the large.
The initial biases and more
rapid convergence favor the small gaps over any computationally tractable range,
and these give a substantial head start to the residue classes to which these small
gaps are assigned.

The observed biases are transient phenomena, but they persist well past the range of 
computationally tractable 
primes.  
The observed biases are due to the quick appearance of small gaps and the slow evolution
of the dynamic system. The asymptotics of the dynamic system play out beyond any conceivable
computational horizon. 

To put this in perspective, 
the cycle $\pgap(\pml{199})$ has more gaps than there are
atoms in the known universe; yet in $\pgap(\pml{p})$ for a $16$-digit prime $p$, 
small gaps like $g=30$ will still be appearing
in frequencies well below their ultimate ratios.  
Running Eratosthenes sieve through $16$-digit primes, $\lambda \approx 0.096$, the three-digit gaps will just be emerging 
compared to the prevailing populations of smaller gaps.

The population models for gaps address the inter-class bias, the distributions across the
residue classes.
We have not addressed any intra-class bias, that is, any uneven distribution
across the ordered pairs $(a,b)$ within a given residue class.  
Once we understand the model for gaps, then any choice of base reassigns the gaps across the residue classes for this base.  The number of ordered
pairs corresponding to a residue class $r$ provides an expected value $W_r$,
and we have seen above that the asymptotic values $W_r(g_{\max},\infty)$ approach these
expected values.

Initial observations along these lines were made in 2016 \cite{FBHLastD}.  Here we advance the 2016 approach with more recent insights
into the evolution of gaps across stages of Eratosthenes sieve.


\bibliographystyle{amsplain}

\providecommand{\bysame}{\leavevmode\hbox to3em{\hrulefill}\thinspace}
\providecommand{\MR}{\relax\ifhmode\unskip\space\fi MR }
\providecommand{\MRhref}[2]{%
  \href{http://www.ams.org/mathscinet-getitem?mr=#1}{#2}
}
\providecommand{\href}[2]{#2}

\end{document}